\documentclass[11pt]{amsart}
\newtheorem{theorem}{Theorem}

\theoremstyle{remark}
\newtheorem{remark}[theorem]{\bf Remark}

\newtheorem{exam}[theorem]{\bf Example}

\def\di{\operatorname{div}}

\begin{document}

\title{Ergodic and chaotic properties of some biological models}
\author{Ryszard Rudnicki}
\address{Institute of Mathematics,
Polish Academy of Sciences, Bankowa 14, 40-007 Katowice, Poland.}
\email{rudnicki@us.edu.pl}
\keywords{one-dimensional transformation, ergodic property, structured population, chaos}


\subjclass[2020]{37A05, 37A10, 92D25}
\thanks{This research was partially supported by
the  National Science Centre (Poland)
Grant No. 2017/27/B/ST1/00100}

\begin{abstract}
In this note we present two types of biological models which have interesting ergodic and chaotic properties.
The first type are one-dimensional transformations, like a logistic map, which are used 
to describe the change in population size in successive generations. We study ergodic properties of such transformations
using Frobenius--Perron operators. The second type are some structured populations models, for example a space-structured model, 
or a model of maturity-distribution of precursors of blood cells.
These models are described by partial differential equations, which generate semiflows on the space of functions.  
We construct strong mixing invariant measures for these semiflows using stochastic precesses.
From properties of invariant measures we deduce some chaotic properties of semiflows such as the existence of dense trajectories and
strong instability of all trajectories. 
\end{abstract}

\maketitle

\section{Introduction}
\label{s:intro} 
Many biological processes behave in a chaotic manner. Classic examples are  
the discrete time population  growth model $x_{n+1}=4x_n(1-x_n)$;  
the Mackey--Glass model~\cite{Mackey-Glass}, in which the number of red blood cells changes according to a delay differential equation;
and a maturity structured model~\cite{Lasota-chaos}, in which a partial differential equation describes the evolution of the distribution of cell maturity.  
One method of studying chaos is to use the ergodic theory.
In general, the stronger the ergodic properties of a dynamical system, the more chaotic its trajectories are (see Section~\ref{s:inf-dim}). 

The aim of this note is to present some results concerning ergodic and chaotic properties of two types of biological models.
The first type are discrete time population growth models. 
Such models are described by unimodal transformations 
and a very useful tool for studying their ergodic properties are the Frobenius--Perron operators~\cite{LiM}.
In the next section, we briefly recall the definitions of the basic ergodic properties and their relation to Frobenius--Perron operators. 
In Section~\ref{s:one-dim} we formulate and prove 
a general result concerning the existence of invariant measures for a large class of unimodal transformations (see Theorem~\ref{w:f-umod}).
The invariant measure is absolutely continuous with respect to the Lebesgue measure, its density is a positive and continuous function
and the transformation is exact. We apply this theorem to the transformation $S(x)=cx(1-x^2)$, to   
the Beverton--Holt transformation $S(x)=-ax+\frac{bx}{1+x}$, and to the Ricker transformation $S(x)=-ax+axe^{\lambda(K-x)}$.

Section~\ref{s:inf-dim} is devoted to study ergodic and chaotic properties of some models of structured populations.
Such models are usually described by partial differential equations, and thus the dynamical systems generated by these equations are defined
 on infinite-dimensional spaces.
When looking for an invariant measure for an infinite dimensional dynamical system, we do not use Frobenius--Perron operators, because it is difficult to indicate the measure with respect to which the invariant measure would be absolutely continuous (would have a density).    
If we consider partial differential equations in which the spatial variable $x$ is a real number, it has turned out convenient to construct an invariant measure by means of stationary stochastic processes and isomorphism of the initial dynamic system with the shift $(T^t \varphi)(s)=\varphi(s+t)$ on a properly chosen space.
A major challenge is to construct an invariant measure if $x$ is  multidimensional. Then
instead of a stationary process, we use a random field~\cite{JDE-2023}. We show how to prove that such dynamical systems have exact invariant measures supported on the whole space. We also present conclusions on the chaotic behavior of such systems.
We apply the obtained results to various biological models:
maturity-distribution of precursors of blood cells; a size structured cellular population; and the space dispersal of a population.

\section{Measure-preserving dynamical systems}
\label{s:ergodic}
Let $(X,\Sigma,\mu)$ be a probability space, i.e. $\mu$ is a probability measure defined on a $\sigma$-algebra $\Sigma$ of subsets of $X$.
Let $\mathcal T=\mathbb N=\{0,1,\dots\}$ or $\mathcal T=[0,\infty)$.
\textit{One-parameter semigroup}\index{one-parameter semigroup}
 is a family $\{\pi_t\}_{t\in \mathcal T}$ 
of transformations of $X$
such that $\pi_s\circ \pi_t=\pi_{s+t}$ for $s,t\in \mathcal T$ and $\pi_0=\rm{Id}$. 
If additionally the function $(t,x)\mapsto \pi_t x$ is measurable as a function from  the Cartesian product $\mathcal T\times X$ to $X$
and the measure $\mu$ is
\textit{invariant}\index{invariant!measure} with respect  to each transformation $\pi_t$, $t\in \mathcal T$, i.e.
 $\mu(\pi_t^{-1}(A))=\mu(A)$ for any set $A\in\Sigma$, then the quadruple $(X,\Sigma,\mu,\pi_t)$ is called
 a \textit{measure-preserving dynamical system}.\index{measure-preserving dynamical system}
The triple $(X,\Sigma,\mu)$ is called a 
\textit{phase space}\index{phase space}
and the set
 $\{\pi_tx\colon t\in \mathcal T\}$ 
is called 
a \textit{trajectory of a point}\index{trajectory} $x\in X$.
If $\mathcal T=\mathbb N$, $S\colon X\to X$ is a measurable transformation,
the measure $\mu$ is invariant with respect to $S$, and $\pi_t=S^t$ 
is the $t$-th iterate of $S$,  then $(X,\Sigma,\mu,\pi_t)$
is a measure-preserving dynamical system.

Our goal is to study properties of measure-preserving dynamical systems:
ergodicity, mixing and exactness. 
A set $A$ is called  \textit{invariant}\index{invariant!set} with respect to a 
semigroup $\{\pi_t\}_{t\in \mathcal T}$,
if $\pi_t^{-1}(A)=A$ for any $A\in \Sigma$ and $t\in \mathcal T$. 
The family of invariant sets forms a $\sigma$-algebra  $\Sigma_{\mathrm{inv}}$.\index{Sigmaa@$\Sigma_{\mathrm{inv}}$}
If $\sigma$-algebra $\Sigma_{\mathrm{inv}}$ is 
trivial, i.e. it consists only of sets of measure $\mu$ zero or one, then   
the  measure-preserving dynamical system
 is said to be \textit{ergodic}\index{ergodic dynamical system}.   

A stronger property than ergodicity is mixing. 
A measure-preserving dynamical system  $(X,\Sigma,\mu,\pi_t)$  is called \textit{mixing}\index{mixing} if 
\begin{equation}
\label{d:mixing}
\lim_{t\to\infty} \mu(A\cap \pi_t^{-1}(B))=\mu(A)\mu(B)\quad 
\text{for  all  $A, B \in \Sigma$}.
\end{equation}
Identifying the measure $\mu$ with the probability P,  one can formulate the condition
\eqref{d:mixing} as follows:
\[   
\lim_{t\to\infty} {\rm P}(\pi_t(x)\in A|x\in B)={\rm P}(A)\quad
\text{for  all $A,B \in \Sigma$ and ${\rm P}(B)>0$},
\]
which means that 
the trajectories of almost all points enter 
a set $A$ with  asymptotic probability ${\rm P}(A)$.

A stronger property than mixing is exactness.
A system $(X,\Sigma,\mu,\pi_t)$  with  double measurable transformations $\pi_t$, i.e. $\pi_t(A)\in \Sigma$ and 
$\pi_t^{-1}(A)\in \Sigma$ for all $A\in \Sigma$ and $t\in \mathcal T$,  and with  an invariant probability measure 
$\mu$ is called 
\textit{exact}\index{exact}
if for every set 
$A \in \Sigma$ with $\mu(A)>0$ 
we have $\lim_{t\to\infty} \mu(\pi_t(A))=1$.
Exactness is equivalent to the following condition:
 $\sigma$-algebra 
$\bigcap_{t \ge 0}  \pi_t^{-1}(\Sigma)$ 
contains only sets of measure $\mu$ zero or one.
Here $\pi_t^{-1}(\Sigma)=\{\pi_t^{-1}(A)\colon A\in \Sigma\}$.

The ergodic properties of the dynamical system can be studied  using Frobenius--Perron operators~ \cite{LiM}.
Let $(X,\Sigma,m)$ be a $\sigma$-finite measure space.
A measurable map $\varphi\colon X\to X$  
is called \textit{non-singular}\index{non-singular map}
if it satisfies the following condition 
\begin{equation}
\label{trana2}
m(A)=0 \Longrightarrow  m(\varphi^{-1}(A))=0\textrm{ for $A\in \Sigma$.}
\end{equation}
Let $L^1=L^1(X,\Sigma,m)$ and let $\varphi$ be a measurable nonsingular transformation of $X$.
An operator  ${P\!}_{\varphi}\colon  L^1\to L^1$
which satisfies the following condition
\begin{equation}
\label{def-FP}
\int_A {P\!}_{\varphi} f(x)\,m(dx)=
\int_{\varphi^{-1}(A)} f(x)\,m(dx)
\textrm{\,  for $A\in \Sigma$ and $f\in L^1$}
\end{equation}
is called the
{\it Frobenius--Perron operator} for the transformation~$\varphi$. 
The operator ${P\!}_{\varphi}$ is linear, \textit{positive}\index{positive!operator} 
(if $f\ge 0$ then ${P\!}_{\varphi}f \ge 0$) and preserves the integral    
($\int_X{P\!}_{\varphi}f\,dm=\int_Xf\,dm $).
The adjoint of the Frobenius--Perron operator
${P}_{\!\varphi}^*\colon L^{\infty}\to L^{\infty}$ is 
given by ${P}_{\!\varphi}^*g(x)= g(\varphi(x))$.

Let    $\varphi\colon X\to X$ be a nonsingular transformation of a 
$\sigma$-finite measure space $(X,\Sigma,m)$.
Denote by  $D$ the set of all densities with respect to $m$,
i.e. functions $f\in L^1(X,\Sigma,m)$ such that $f\ge 0$ and $\|f\|=1$.
Let $f^*\in D$. Then the measure 
$\mu(A)=\int_Af^*\,dm$ for $A\in \Sigma$,
 is invariant with respect to $\varphi$  
if and only if ${P\!}_{\varphi}f^*=f^*$.
If the map
$\pi\colon \mathcal T\times X\to X$ is measurable, 
 $\{\pi_t\}_{t\in \mathcal T}$ is
a one-parameter semigroup of nonsingular transformations of $(X,\Sigma,m)$, and
$P^t$ denotes the Frobenius--Perron operator corresponding to $\pi_t$,
 then the quadruple $(X,\Sigma,\mu,\pi_t)$ 
is a measure-preserving dynamical system
if and only if $P^t\!f^*=f^*$ for all $t\in \mathcal T$.
We collect the relations between ergodic properties of dynamical systems 
$(X,\Sigma,\mu,\pi_t)$ and the behavior of the 
Frobenius--Perron operators $P^t$ in Table~\ref{tab1}.
\begin{table}[h]
\begin{tabular}{|c|c|}
\hline
$\mu$ & $f^*$
\\
\hline
invariant&$P^t\!f^*=f^*$ for all $t\in \mathcal T$\\
ergodic&$f^*$ is a unique fixed point  in $D$ of all $P^t$ \\
mixing& w-$\lim_{t\to\infty} P^t\!f=f^*$ for every $f\in D$\\
exact & $\lim_{t\to\infty} P^t\!f=f^*$ for every $f\in D$\\
\hline
\end{tabular}
\caption{}
\label{tab1}
\end{table} 

We recall that the \textit{weak limit}\index{weak limit}  w-$\lim_{t\to\infty} P^t\!f$ is a function $h\in L^1$  such that for 
every $g\in L^{\infty}$ we have 
\[
\lim_{t\to\infty}\int_X  P^t\!f(x)g(x)\,m(dx)=\int_X  h(x)g(x)\,m(dx).
\]

Now we show how to find the Frobenius--Perron operator for a piecewise smooth 
transformation $\varphi$ of some interval $\Delta$.
We assume that there exists 
at most countable family of pairwise disjoint open intervals
$\Delta_i$, $i\in I$, contained in $\Delta$ and having the following properties:
\begin{enumerate}
\item[a)] the sets  $\Delta_0=\Delta\setminus \bigcup_{i\in I} \Delta_i$ and
 $\varphi(\Delta_0)$ have zero Lebesgue measure,      
 \item[b)] maps $\varphi_i =\varphi\Big |_{\Delta_i}$ are 
 $C^1$-maps 
from $\Delta_i$ onto $\varphi(\Delta_i)$ and $\varphi'_i(x)\ne 0$
for  $x\in \Delta_i$.      
\end{enumerate}

\noindent Then the Frobenius--Perron operator ${P\!}_{\varphi}$ exists and is given by the formula
\begin{equation}
\label{F-P-operator-gladki}
{P\!}_{\varphi}f(x)=\sum_{i\in I_x} f(\psi_i(x))|\psi_i'(x)|,
\end{equation} 
where $\psi_i=\varphi_i^{-1}$ and $I_x=\{i\colon x\in \varphi(\Delta_i)\}$ (see \cite{Rudnicki-LN,RT-K-k}). 

\begin{exam}
\label{tent-map}
Let us consider the \textit{tent map}\index{tent map} 
$\varphi\colon [0,1]\to [0,1]$ given by
\begin{equation}
\label{tent}
\varphi(x)=
\begin{cases}
2x &\textrm{for $x\in [0,1/2]$},\\
2-2x&\textrm{for $x\in (1/2,1]$.}
\end{cases}
\end{equation} 
The Frobenius--Perron operator ${P\!}_{\varphi}$ is of the form
\begin{equation}
\label{FP-diad}
{P\!}_\varphi f(x)= \tfrac 12 f(\tfrac 12x)+\tfrac 12f(1-\tfrac 12x).
\end{equation} 
It is easy to see that the density $f^*=1_{[0,1]}$ satisfies ${P\!}_\varphi f^*=f^*$, and therefore the Lebesgue measure on $[0,1]$ is invariant with respect $\varphi$.
We check that $\lim_{t\to\infty}{P}^t_{\!\varphi}f=f^*$ for any density $f$.
It is sufficient to check  this condition 
for densities which are Lipschitz continuous. 
Let $L$ be  the Lipschitz constant for $f$. Then
\[
|{P\!}_{\varphi} f(x)-{P\!}_{\varphi}f(y)|\le \tfrac 12
 |f(\tfrac{x}2) -f(\tfrac{y}2)|   +\tfrac 12  | f(1-\tfrac 12x)+ f(1-\tfrac 12y)|\le  \tfrac L2 |x-y|.   
\]
Thus   $L/2$ is the  Lipschitz constant for  ${P\!}_{\varphi}f$ and 
 by induction we conclude that $L/2^t$ is  the  Lipschitz constant for  
${P}^t_{\!\varphi}f$.
Hence, the sequence $({P}^t_{\!\varphi}f)$ converges uniformly to a constant function.
Since ${P}^t_{\!\varphi}f$  are densities, $({P}^t_{\!\varphi}f)$ converges to $f^*$ uniformly, which implies 
the convergence in $L^1$.
The condition $\lim_{t\to\infty}{P}^t_{\!\varphi}f=f^*$
implies the exactness of the transformation~$\varphi$.
\end{exam}

We now introduce the notion of an isomorphic dynamical system, which we use extensively  in the next section. 

Let  $(X,\Sigma,\mu,\pi_t)$  be a measure-preserving dynamical system,
$(\tilde X,\tilde \Sigma)$ be a measurable space and 
$\tilde \pi \colon \mathcal T\times \tilde X\to \tilde X$ be a measurable one-parameter semigroup. Assume that there exists an invertible double measurable function $\alpha\colon X\to \tilde X$ such that $\alpha\circ \pi_t=\tilde \pi_t\circ\alpha$ 
for $t\in \mathcal T$. Let 
$\tilde \mu(A)=\mu(\alpha^{-1}(A))$ for $A\in \tilde \Sigma$.   
Then $(\tilde X,\tilde \Sigma,\tilde \mu,\tilde \pi_t)$ is 
a measure-preserving dynamical system 
\textit{isomorphic}\index{isomorphic dynamical system} to 
$(X,\Sigma,\mu,\pi_t)$. If two dynamical systems are isomorphic, then they have the same ergodic properties.

\begin{exam}
\label{log-map}
Now we check that the \textit{logistic map}\index{logistic map}  $\psi(x)=4x(1-x)$ on $[0,1]$ is exact. 
Let $\varphi$ be the tent map and $\alpha(x)=\tfrac12-\tfrac 12\cos(\pi x)$. Then $\psi\circ \alpha=\alpha\circ \varphi$. 
Let $\mu(A)=|\alpha^{-1}(A)|$
for $A\in\mathcal B([0,1])$, where $|\cdot|$ is the Lebesgue measure
on the $\sigma$-algebra $\mathcal B([0,1])$ of Borel subsets of $[0,1]$. This implies that the map $\psi$
is an exact transformation of the space $([0,1],\mathcal B([0,1]),\mu)$. Since $\alpha^{-1}(x)=\pi^{-1}\arccos(1-2x)$ we find
that $d\mu=g^*(x)\,dx$, where
\[
g^*(x)=\frac{d}{dx}\alpha^{-1}(x) =\Big[\pi\sqrt{x(1-x)}\,\Big]^{-1}.
\]
\end{exam}

More advanced examples of applications of Frobenius--Perron operators to study ergodic properties of dynamical systems 
can be found in~\cite{BG,LiM,LY}. 

\section{Population generation models}
\label{s:one-dim}
Consider a population model in which the number of individuals in successive generations changes according to a recursive formula
\begin{equation}
\label{m:pok-rek}
x_{n+1}=S(x_n),
\end{equation}
$S\colon [0,K]\to [0,K]$ is a continuous function of the form  $S(x)=bxf(x)$, where $b$  is the 
\textit{per capita birth rate},\index{per capita birth rate}
 $K$ is the 
\textit{capacity of the environment},\index{capacity of the environment}
 in this case we identify it with the maximum population size, and $f(x)$ is a 
\textit{competitive function}\index{competitive function} describing the probability of survival if $x$ is the population size.
This is a \textit{model with disjoint generations}.  We assume that for some $\tilde x\in (0,K]$ we have $S(\tilde x)=K$, because otherwise the maximum population size would be smaller than $K$.

We will consider specific transformations $S$ appearing in known models, in which $S(0)=S(K)=0$ and $S$ is a strictly increasing function in the interval $[0,\tilde x]$ and strictly decreasing in the interval $[\tilde x,K]$.
Such transformations are called 
\textit{unimodal}.\index{unimodal transformation}

Example~\ref{log-map}
suggests that exactness of a unimodal transformation $S\colon [0,K]\to [0,K]$ can be studied by showing the
isomorphism of  $S$ with the tent map $\varphi$, and thus finding such a function $\Phi$ strictly increasing from the interval $[0,1]$ to the interval $[0,K]$ (preferably of class $C^1$)
satisfying the condition $S\circ \Phi=\Phi\circ \varphi$. Unfortunately, this method is not easy, so we need to modify it.
To do this, we present a theorem on the exactness of a certain class of piecewise expanding  maps of an interval $[0,L]$.

\begin{theorem}
\label{tw-o-doklad-kaw-rozsz}
Let $S\colon [0,L]\to [0,L]$ be a map satisfying the following conditions:
\begin{enumerate}
\item[\rm(i)] there exists a partition $0 = a_0 < a_1 < \dots < a_r = L$ of the interval  $[0,L ]$ such that for each   $i = 1, \dots, r$
the restriction of $S$ to the interval  $(a_{i-1},a_i)$ is a  $C^2$-function,
\item[\rm(ii)]  $\,S((a_{i-1},a_i)) =(0,L)$ for $i=1,\dots,r$, 
\item[\rm(iii)] $\,|S'(x)|>\lambda$ for some $\lambda>1$ and for all  $x\ne a_i$, $i=0,\dots,r$,
\item [\rm(iv)] $\,|S''(x)|/[S'(x)]^2<c$ for some  $c>0$ and for all $x\ne a_i$, $i=0,\dots,r$.
\end{enumerate}
Then the Frobenius--Perron operator $P_S$ is asymptotically stable,
i.e., there exists $f^*\in D$ such that  $\lim_{n\to\infty} P_S^nf=f^*$ for every $f\in D$.
\end{theorem}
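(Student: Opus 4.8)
The plan is to run the Lasota--Yorke scheme adapted to these hypotheses: first derive a variation inequality for the iterates of the Frobenius--Perron operator, then extract from it an invariant density together with the constrictiveness of $P_S$, and finally use the full--branch condition (ii) to exclude any nontrivial cyclic behaviour, which promotes constrictiveness to the asserted asymptotic stability.

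First I would establish a Lasota--Yorke inequality. Since $S$ carries each $(a_{i-1},a_i)$ \emph{onto} all of $(0,L)$, formula \eqref{F-P-operator-gladki} becomes $P_Sf(x)=\sum_{i=1}^{r}f(\psi_i(x))\,g_i(x)$ for a.e.\ $x\in(0,L)$, with $\psi_i=(S|_{(a_{i-1},a_i)})^{-1}$ and $g_i=1/|S'\circ\psi_i|$ of class $C^1$, and with \emph{no} characteristic functions of proper subintervals present --- exactly the feature that makes the boundary terms of the variation estimate vanish. Using $\bigvee_{(0,L)}(f\circ\psi_i)=\bigvee_{(a_{i-1},a_i)}f$, the bound $\sup g_i\le\lambda^{-1}$ from (iii), the product estimate $\bigvee(uv)\le(\sup|v|)\bigvee u+\int|u|\,|v'|$ valid for $v\in C^1$, and the change of variables $y=S(x)$ which turns $\int_{(0,L)}|f\circ\psi_i|\,|g_i'|$ into $\int_{a_{i-1}}^{a_i}\frac{|S''|}{(S')^2}\,|f|\le c\int_{a_{i-1}}^{a_i}|f|$ by (iv), I obtain
\[
\bigvee_0^L P_Sf\ \le\ \lambda^{-1}\bigvee_0^L f\ +\ c\,\|f\|_1 ,
\]
and hence $\bigvee_0^L P_S^{\,n}f\le\lambda^{-n}\bigvee_0^L f+\frac{c}{1-\lambda^{-1}}\|f\|_1$ for every $n\ge 1$. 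From this the Cesàro averages $\frac1N\sum_{n<N}P_S^n\mathbf 1_{[0,L]}/L$ are densities of uniformly bounded variation, so a subsequence converges in $L^1$ (Helly's selection theorem) to some $f^*\in D$ with $P_Sf^*=f^*$ and $\bigvee_0^L f^*<\infty$; and, approximating an arbitrary $f\in D$ in $L^1$ by densities of bounded variation and using that $P_S$ is an $L^1$-contraction, one checks that $\operatorname{dist}_{L^1}(P_S^nf,\mathcal F)\to 0$ for every $f\in D$, where $\mathcal F=\{g\in D:\bigvee_0^L g\le 2c/(1-\lambda^{-1})\}$ is $L^1$-compact. Thus $P_S$ is a constrictive Markov operator.

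Next I would invoke the spectral decomposition theorem for constrictive Markov operators: there are densities $h_1,\dots,h_k$ with pairwise disjoint supports $B_1,\dots,B_k$ and a cyclic permutation $\sigma$ of $\{1,\dots,k\}$ such that $P_Sh_j=h_{\sigma(j)}$ and $\|P_S^nf-\sum_j\lambda_j(f)h_{\sigma^n(j)}\|_1\to 0$ for every $f\in D$; the whole point is then to prove $k=1$. Each $h_j$ is fixed by $P_S^k$, hence has bounded variation by the inequality above applied to $S^k$ (which again satisfies (i)--(iv), with expansion constant $\lambda^k$ and a distortion constant bounded by $c/(1-\lambda^{-1})$), and a nonzero density of bounded variation is positive at some point of its right--continuous representative and therefore, by right--continuity, positive on a whole subinterval; so $B_j$ contains, modulo a null set, an interval $J_j$ of positive length. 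From $P_Sh_j=h_{\sigma(j)}$ one reads off $S^{-1}(B_{\sigma(j)})\supseteq B_j$ modulo null sets; since the $B_1,\dots,B_k$ partition $[0,L]$, the sets $S^{-1}(B_{\sigma(j)})$ are pairwise disjoint and cover $[0,L]$, which forces $S^{-1}(B_{\sigma(j)})=B_j$, and hence $S(B_j)=B_{\sigma(j)}$, modulo null sets; iterating, $S^n(B_j)=B_{\sigma^n(j)}$ modulo a null set for all $n$. On the other hand (ii) and (iii) make $S$ topologically exact: for any interval $J$ of positive length, expansion forces the lengths of the successive images of $J$ (or of a fixed iterate of $S$) to grow until some image contains a full partition interval $(a_{i-1},a_i)$, and one more application gives $(0,L)$; so $S^{n_0}(J)=[0,L]$ for some $n_0$ and all larger exponents. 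Taking $J=J_j\subseteq B_j$ yields $B_{\sigma^{n_0}(j)}=S^{n_0}(J_j)=[0,L]$ modulo a null set, which is incompatible with the $B_l$ being disjoint of positive measure unless $k=1$. Therefore $h_1=f^*$ and $\lim_{n\to\infty}P_S^nf=f^*$ for every $f\in D$.

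The hard part will not be the variation inequality --- with hypotheses (ii) and (iv) it comes out cleanly with contraction coefficient $\lambda^{-1}<1$ --- but the last step: verifying topological exactness with the right bookkeeping when $\lambda$ is only slightly larger than $1$, and, above all, converting the purely measure--theoretic permutation structure supplied by the spectral decomposition theorem into the geometric identity $S^n(B_j)=B_{\sigma^n(j)}$ that finally excludes the cyclic case $k\ge 2$.
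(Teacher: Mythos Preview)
The paper does not give its own proof of this theorem; it simply cites \cite[Th.~6.2.2]{LiM} (Lasota--Mackey, \emph{Chaos, Fractals and Noise}). Your proposal follows precisely the scheme used there: the Lasota--Yorke bounded-variation inequality (which comes out cleanly here because of the full-branch hypothesis (ii)), constrictiveness of $P_S$, the spectral decomposition theorem for constrictive Markov operators, and finally the exclusion of nontrivial cycling via an expansion/covering argument. So your approach coincides with that of the cited reference.

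One small correction: the assertion that the supports $B_1,\dots,B_k$ \emph{partition} $[0,L]$ is not part of the spectral decomposition theorem as stated in \cite{LiM}; only disjointness is guaranteed. But the detour through $S^{-1}(B_{\sigma(j)})=B_j$ is unnecessary anyway. From $P_Sh_j=h_{\sigma(j)}$ and the explicit formula $P_Sh_j(x)=\sum_i h_j(\psi_i(x))|\psi_i'(x)|$ --- with every $\psi_i$ defined on all of $(0,L)$ by (ii) and $|\psi_i'|>0$ by (iii) --- one reads off directly that $\{x:P_Sh_j(x)>0\}=S(\{y:h_j(y)>0\})$ modulo null sets, i.e.\ $S(B_j)=B_{\sigma(j)}$. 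Iterating gives $S^n(B_j)=B_{\sigma^n(j)}$, and your topological-exactness argument then forces $k=1$ exactly as you wrote.
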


\begin{remark}
If the map $S$ satisfies conditions (i), (iii), and 
\begin{enumerate}
 \item[{\rm(iv}${}'$)]  the map $S$ has $C^2$ extensions from open intervals $(a_{i-1},a_i)$ onto closed intervals $[a_{i-1},a_i]$,
 for $i=1,\dots,r$,
\end{enumerate}
then $S$ has  an invariant measure absolutely continuous with respect to the Lebesgue measure (see \cite{LY1973}). 
The condition (iv) follows immediately from (iv${}'$). 
Moreover if $S$ satisfies condtions  (i),(ii),(iii),(iv${}'$), then
the invariant  density  $f^*$ is a continuous function and satisfies inequalities
\begin{equation}
\label{oszac-gest-niezm} 
M^{-1}\le f^*(x)\le M\,\,\,  \text{for some $M>0$ and for each $x\in [0,L]$.}
 \end{equation}
\end{remark}

The proof of Theorem~\ref{tw-o-doklad-kaw-rozsz} is given in~\cite[Th.  6.2.2]{LiM}.
The proof of the property (\ref{oszac-gest-niezm}), as well as a number of other interesting results on one-dimensional transformations can be found in the  monograph~\cite{BG}. 

Now we give sufficient conditions for exactness of unimodal transformations. 

\begin{theorem}
\label{w:f-umod}
We assume that $S$ is a $C^3$ unimodal function 
 satisfying the condition
\begin{equation}
\label{war-barS-gladkie}
S'(x)> 0 \text{ for $x\in [0,\tilde x)$},\quad S'(x)<0 \text{ for $x\in (\tilde x,K]$},\quad  S''(\tilde x)< 0.
\end{equation}
Let 
\begin{equation}
\label{funkcja-h-il}
h_1(x)=\sqrt{\frac{x(K-x)}{S(x)}},\quad h_2(x)= \frac{|S'(x)|}{\sqrt{(K-S(x))}}.
\end{equation}
If $\,\inf h_1h_2>1$, 
then there is a probability measure $\mu$ absolutely continuous with respect to the Lebesgue measure such that   
the quadruple $([0,K],\mathcal B([0,K]),\mu,S)$ is a  measure-preserving  dynamical system. This system is exact,
and the density $g^*$ of the measure $\mu$ is a positive and continuous function on the interval $(0,K)$ and 
\begin{equation}
\label{funkcja-g*}
\lim_{x\to 0^+} g^*(x)=\lim_{x\to K^-} g^*(x)=\infty. 
\end{equation}
\end{theorem}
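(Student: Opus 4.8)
The plan is to carry out, in general, the conjugation idea illustrated in Example~\ref{log-map}: reduce $S$ to a piecewise expanding map of an interval and then invoke Theorem~\ref{tw-o-doklad-kaw-rozsz}. Concretely, I would introduce the change of variables $\beta\colon[0,K]\to[0,\pi]$ defined by $\beta(x)=2\arcsin\sqrt{x/K}$, whose inverse is $\beta^{-1}(y)=\tfrac K2(1-\cos y)=K\sin^2(y/2)$ and whose derivative is $\beta'(x)=1/\sqrt{x(K-x)}$. Thus $\beta$ is a homeomorphism of $[0,K]$ onto $[0,\pi]$, of class $C^\infty$ on the open interval, with $\beta(0)=0$, $\beta(K)=\pi$, and $\beta(\tilde x)\in(0,\pi)$ (note that $\tilde x<K$, since $S(K)=0\neq K$). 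Put $T=\beta\circ S\circ\beta^{-1}\colon[0,\pi]\to[0,\pi]$. The task is then to show that $T$ satisfies conditions (i)--(iv) of Theorem~\ref{tw-o-doklad-kaw-rozsz} together with condition (iv${}'$) of the remark following it, to conclude that $T$ is exact with a continuous invariant density bounded above and below by positive constants, and finally to transfer all this to $S$ through the isomorphism induced by $\beta$.

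For the partition $0=a_0<a_1=\beta(\tilde x)<a_2=\pi$, condition \eqref{war-barS-gladkie} implies that $S$ maps $[0,\tilde x]$ and $[\tilde x,K]$ homeomorphically onto $[0,K]$, so $T$ maps each interval $(a_{i-1},a_i)$ onto $(0,\pi)$ — this is (ii); on those open intervals $\beta$ and $\beta^{-1}$ are smooth and $S$ is $C^3$, so $T$ is $C^3$ there, giving (i). A chain-rule computation, using $\beta'(x)=1/\sqrt{x(K-x)}$, yields
\[
|T'(y)|=\frac{|S'(x)|\,\sqrt{x(K-x)}}{\sqrt{S(x)\,(K-S(x))}}=h_1(x)\,h_2(x),\qquad x=\beta^{-1}(y),
\]
with $h_1,h_2$ as in \eqref{funkcja-h-il}. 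Hence $|T'(y)|\ge\inf h_1h_2>1$ for $y\neq a_i$, which is (iii) with $\lambda=\inf h_1h_2$.

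The technical heart — and the step I expect to be the main obstacle — is (iv${}'$): that $T$ has a $C^2$ extension from each $(a_{i-1},a_i)$ to the closed interval. Near $y=0$ and $y=\pi$ this rests on the fact that $\beta^{-1}$ is \emph{even} about these two points, so $F:=S\circ\beta^{-1}$ is $C^3$ and even near them; at $y=0$ one has $F(0)=0$ and $F''(0)=S'(0)\,(\beta^{-1})''(0)=\tfrac K2\,S'(0)>0$ by \eqref{war-barS-gladkie}, and writing $F(y)=y^2\psi(y)$ with $\psi$ of class $C^1$, $\psi(0)>0$, $\psi'(0)=0$, the identity $T(y)=\beta(F(y))$ together with the Taylor remainder estimates available in the $C^3$ category shows that $T,T',T''$ have finite one-sided limits at $0$ (indeed $T''(y)\to0$); the point $y=\pi$ is treated identically using $S'(K)<0$. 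Near $y=a_1=\beta(\tilde x)$ one uses the hypothesis $S''(\tilde x)<0$: then $K-S(x)$ has a nondegenerate quadratic zero at $\tilde x$, so $w(y):=K-F(y)$ is $C^3$ with a double zero and $w''(a_1)>0$, and writing $w(y)=(y-a_1)^2\omega(y)$ and using $\beta(K-v)=\pi-2\arcsin\sqrt{v/K}$ one again obtains a $C^2$ extension of $T$ to $[0,a_1]$ and to $[a_1,\pi]$; condition (iv) follows from (iv${}'$). I expect the estimates at $a_1$ to be the most delicate, because there $\beta^{-1}$ is no longer even and one must check that the (possibly nonzero) cubic Taylor coefficient of $w$ does not destroy the boundedness of $T''$; this is precisely where the $C^3$ assumption on $S$ is used.

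Granting (i)--(iv) and (iv${}'$), Theorem~\ref{tw-o-doklad-kaw-rozsz} provides $f^*\in D$ with $P_T^nf\to f^*$ for every $f\in D$, i.e.\ (by Table~\ref{tab1}) $T$ is exact with respect to the absolutely continuous invariant probability $\tilde\mu=f^*\,dy$, and the remark gives that $f^*$ is continuous on $[0,\pi]$ and satisfies $M^{-1}\le f^*\le M$ for some $M>0$. Since $\beta$ is an invertible double measurable map with $\beta\circ S=T\circ\beta$, the quadruple $([0,K],\mathcal B([0,K]),\mu,S)$ with $\mu(A):=\tilde\mu(\beta(A))$ is a measure-preserving dynamical system isomorphic to $([0,\pi],\mathcal B([0,\pi]),\tilde\mu,T)$, hence also exact. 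Finally, from $\mu(A)=\tilde\mu(\beta(A))=\int_{\beta(A)}f^*\,dy=\int_A f^*(\beta(x))\,\beta'(x)\,dx$ one reads off the density $g^*(x)=f^*(\beta(x))/\sqrt{x(K-x)}$, which is positive and continuous on $(0,K)$ because $f^*$ is continuous and $\ge M^{-1}$, and which obeys \eqref{funkcja-g*} because $f^*(\beta(x))$ stays in $[M^{-1},M]$ while $1/\sqrt{x(K-x)}\to\infty$ as $x\to0^+$ or $x\to K^-$.
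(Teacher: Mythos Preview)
Your proposal is correct and follows essentially the same route as the paper: the conjugation $\beta$ you introduce is exactly the paper's $\Psi=\Phi^{-1}$ (note $2\arcsin\sqrt{x/K}=\arccos(1-2x/K)$), so your $T$ coincides with the paper's $\tilde S=\Phi^{-1}\circ S\circ\Phi$, and the chain-rule identity $|T'|=h_1h_2$ together with the transfer $g^*(x)=f^*(\beta(x))/\sqrt{x(K-x)}$ are precisely what the paper obtains.

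The only noteworthy difference is in how condition (iv${}'$) is verified. You propose local Taylor-expansion arguments at each of the three points $0,a_1,\pi$; the paper instead observes globally that $\tilde S'=h\circ\Phi$ and $\tilde S''=(h'\circ\Phi)\cdot(1/\psi\circ\Phi)$, and then shows once and for all that $h_1$ and $h_2$ are $C^1$ on the whole of $[0,K]$ via the factorizations $S(x)=x(K-x)s_1(x)$, $S'(x)=S''(\tilde x)(x-\tilde x)s_2(x)$, and $K-S(x)=\tfrac12 S''(\tilde x)(x-\tilde x)^2 s_3(x)$ with $s_1,s_2,s_3$ of class $C^1$ and nonvanishing. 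This makes your anticipated ``delicate'' point at $a_1$ painless: since $h_2^2=2S''(\tilde x)\,s_2^2/s_3$ is $C^1$ and strictly positive, $h_2$ is $C^1$ across $\tilde x$, and the cubic Taylor coefficient you worried about is automatically absorbed. Your approach would also go through, but the paper's factorization argument is shorter and avoids the case-by-case endpoint analysis.
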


\begin{proof}
 Let $\Phi$ be the  function from the interval $[0,\pi]$ onto the interval $[0,K]$ given by $\Phi(x)=\frac K2(1-\cos x)$. 
Then $\Phi$ is strictly increasing and we can define the transformation $\tilde S\colon [0,\pi]\to [0,\pi]$ by  $\tilde S(x)=\Phi^{-1}\circ S\circ \Phi$.
We check that $\tilde S$ satisfies conditions (i), (ii),  (iii), and  (iv${}'$).
Let  $\Psi=\Phi^{-1}$ and $\psi=\Psi'$. 
Then
\[
\tilde S'(x)=\Psi'((S\circ\Phi)(x))S'(\Phi(x))\Phi'(x)=\frac{\Psi'(S(y))S'(y)}{\Psi'(y)}=\frac{\psi(S(y))S'(y)}{\psi(y)},
\]
where $y=\Phi(x)$. Let 
\[
h(y)=\frac{\psi(S(y))S'(y)}{\psi(y)}.
\]
Then 
\[
\tilde S''(x)=h'(\Phi(x))\Phi'(x)=\frac{h'(y)}{\psi(y)}.
\]
Since 
\begin{equation}
\label{psi-spec}
 \Psi(x)=\Phi^{-1}(x)=\arccos(1-2x/K), \quad \psi(x)=\Psi'(x)=\frac{1}{\sqrt{x(K-x)}},
 \end{equation}
we have
\begin{equation}
\label{funkcja-h}
|h(x)|=\frac{\psi(S(x))|S'(x)|}{\psi(x)}=\frac{\sqrt{x(K-x)}\,|S'(x)|}{\sqrt{S(x)(K-S(x))}}=h_1(x)h_2(x).
\end{equation}

Note that condition (iii) holds when $\inf |h|=\inf h_1h_2 >1$.
We check that $h_1$ and $h_2$ are  $C^1$ functions.
Indeed, the function $h_1$ can only have singularities at the points $x=0$ and $x=K$, and since 
$S(0)=S(K)=0$ and $S'(0)\ne 0$, $S'(K)\ne 0$,  the function $S$ can be represented as the product 
$S(x)=x(K-x)s_1(x)$, where $s_1$ is a strictly positive $C^1$ function, so 
 $h_1(x)=1/\sqrt{s_1(x)}$ is a $C^1$ function.
The square of the function $h_2$ is the quotient of the functions $S'^2$ and $K-S$ and can only have a singularity at the point $\tilde x$.
Since $S'(\tilde x)=0$,  we can represent the function $S'$ as $S'(x)=S''(\tilde x)(x-\tilde x)s_2(x)$,
where $s_2$ is a $C^1$  function. Since $\tilde x$ is the zero of the function $K-S$ and its first derivative, 
the function $K-S$ can be written 
in the form $K-S(x)=\frac12 S''(\tilde x)(x-\tilde x)^2s_3(x)$, where again $s_3$ is a $C^1$ function different from zero.
We can see that 
\[
h_2^2(x)=\frac{[S'(x)]^2}{K-S(x)}=2S''(\tilde x)\frac{s_2^2(x)}{s_3(x)}
\]
and $s_2(x)>0$ for $x\in [0,K]$, so $h_2$ is a $C^1$ function. 
Since $1/\psi(x)=\sqrt{x(K-x)}$ is a continuous function, the condition  (iv${}'$)  holds.

Since $S$ is a unimodal function, conditions (i) and (ii) follow immediately from (\ref{war-barS-gladkie}).
If $f^*$ is the invariant density of the mapping $\tilde S$, then 
\[
g^*(x)=f^*(\Psi(x))|\Psi'(x)|=\frac{1}{\sqrt{x(K-x)}}
\,f^*(\arccos(1-2x/K))
\]
is the invariant density of  $S$. Thus $g^*$ is a continuous function in the interval $(0,K)$ 
and satisfies \eqref{funkcja-g*}.
\end{proof}

\begin{exam}
\label{b:trans-trzeciego-st}
Let us consider the transformation $S(x)=cx(1-x^2)$ defined on the interval $[0,1]$.
Since $S'(x)=c(1-3x^2)$, we find that  $\tilde x=\frac{\sqrt{3}}{3}$ and from the equation $S(\tilde x)=1$ that
 $c=\frac{3\sqrt{3}}{2}$.
The polynomial $1-S(x)=1-cx(1-x^2)$
has a double zero at $\tilde x$, so its decomposition into factors is of the form:
\[
1-S(x)=c\Big(x-\frac{\sqrt{3}}{3}\Big)^2\Big(x+\frac{2\sqrt{3}}{3}\Big).
\]
Thus, we easily determine $h$ and check the inequality $\inf |h|>1$:
\[
\begin{aligned}
|h(x)|&=\frac{\sqrt{x(1-x)}\,|S'(x)|}{\sqrt{S(x)(1-S(x))}}=
\frac{c|1-\sqrt 3 x|(1+\sqrt 3 x)}{\sqrt{c^2(1+x)\Big(x-\frac{\sqrt{3}}{3}\Big)^2\Big(x+\frac{2\sqrt{3}}{3}\Big)}}\\
&= \frac{3\Big(x+\frac{\sqrt 3}{3}\Big )}{\sqrt{(x+1)\Big(x+\frac{2\sqrt{3}}{3}\Big)}}\ge 
 \frac{3\frac{\sqrt 3}{3}}{\sqrt{\frac{2\sqrt{3}}{3}}}=\frac{3}{\sqrt{2\sqrt{3}}}>1.
\end{aligned} 
\]
\end{exam}

\begin{exam}
\label{transformacja-BH}
Consider the 
\textit{Beverton--Holt transformation}\index{Beverton--Holt transformation}  
\[
S(x)=-ax+\frac{bx}{1+x}.
\]
Observe that  $S(K)=0$ if $b=(K+1)a$.
Since $S'(x)=-a+b/(1+x)^2$, the function  $S$ has the maximum at $\tilde x=\sqrt{b/a}-1=\sqrt{K+1}-1$.
Hence
\[
S(\tilde x)=-a\tilde x+\frac{a(K+1)\tilde x}{1+\tilde x}=-a\tilde x+a\tilde x\sqrt{K+1}=a(\sqrt{K+1}-1)^2,
\] 
and since $S(\tilde x)=K$, we have
\[
a=\frac{K}{(\sqrt{K+1}-1)^2},\quad b=\frac{K(K+1)}{(\sqrt{K+1}-1)^2}.
\]
Then
\[
\begin{aligned}
h_1(x)&=\sqrt{\frac{x(K-x)}{S(x)}}=\sqrt{\frac{K-x}{-a+\frac{b}{1+x}}}=\sqrt{\frac{(K-x)(1+x)}{-a(1+x)+a(K+1)}}=
\sqrt{\frac{1+x}{a}},
\\
h_2(x)&=\frac{|S'(x)|}{\sqrt{K-S(x)}}=\frac{\Big|-a+\frac{b}{(1+x)^2}\Big|}{\sqrt{K +ax-\frac{bx}{1+x}}}
=\frac{a\Big|(x+1)^2-\frac{b}{a}\Big|(x+1)^{-3/2}}{\sqrt{(ax+K)(x+1)-bx}}\\
&=\frac{a\big|(x-\tilde x)\big(x+1+\sqrt{K+1}\,\big)\big|(x+1)^{-3/2}}{\sqrt{a(x-\tilde x)^2}}\\
&=\sqrt a\,\big(x+1+\sqrt{K+1}\,\big)(x+1)^{-3/2},
\end{aligned}
\]
hence
\[
h_1(x)h_2(x)=\frac{x+1+\sqrt{K+1}}{x+1}\ge \frac{K+1+\sqrt{K+1}}{K+1}>1.
\]
\end{exam}

In some examples it is difficult to find the greatest lower bound of the function  $h_2$.  
If $\inf |S''|>0$, then we can estimate $h_2$ from below by applying Cauchy's mean value theorem:
if functions $f$ and $g$ are continuous in the interval  $[a,b]$
and differentiable in the interval $(a,b)$, then there is a point 
$c\in (a,b)$ such that
\begin{equation}
\label{tw-Cauchy-wzor}
(f(b)-f(a))g'(c)=(g(b)-g(a))f'(c).
\end{equation}
Let $f(x)=S'^2(x)$, $g(x)=K-S(x)$ and we choose $a=\tilde x$, $b=x$, if $x>\tilde x$, and $b=\tilde x$, $a=x$, if $x<\tilde x$.
In both cases   
\[
\frac{S'^2(x)}{K-S(x)}=\frac{2S''(c)S'(c)}{-S'(c)}=-2S''(c)
\]    
for some $c\in [0,K]$. Thus $ h_2\ge \min\sqrt{2|S''|}$.

\begin{exam}
\label{transformacja-Rickera}
Consider the  \textit{Ricker transformation}\index{Ricker transformation}
\[
S(x)=-ax+axe^{\lambda(K-x)}.
\]
It satisfies the condition $S(0)=S(K)=0$. 
We  change coordinates linearly to get  the maximum of $S$ at $\tilde x=1$,
and then we find the relations between $a$, $K$, and $\lambda$. Since $S(1)=K$ and $S'(1)=0$, we obtain
\[
K=1-\lambda^{-1}\ln(1-\lambda),\quad a=\frac{(1-\lambda)K}{\lambda}. 
\]  
Let us observe that 
\begin{align*}
h_1(x)&=\sqrt{\frac{x(K-x)}{S(x)}}=\sqrt{\frac{K-x}{ae^{\lambda(K-x)}-a}}\,\ge \sqrt{\frac{K}{ae^{\lambda K}-a}}\,,
\\
S''(x)&=\lambda a(\lambda x-2)e^{\lambda(K-x)}.
\end{align*}
Assume that $\lambda K<2$. Then $S''$ is a negative and increasing function. Hence
\begin{align*} 
|S''(x)|&\ge \lambda a|\lambda K-2|,
 \\ 
h_2(x)&\ge \sqrt{2\lambda a(2-\lambda K)}.
\end{align*}
From these inequalities we obtain
 \[
|h(x)|\ge \sqrt{\frac{2\lambda K(2-\lambda K)}{e^{\lambda K}-1}}.
\] 
Let $c_0>0$ be a constant such  that $2c_0(2-c_0)=e^{c_0}-1$. Then $c_0\approx 1.0928$ and 
$\min |h|>1$ for $\lambda K<c_0$.
Since $\lambda K=\lambda-\ln(1-\lambda)$,
the inequality 
\[
\lambda-\ln(1-\lambda)<c_0
\]
implies that $\min |h|>1$.
We can now choose $\lambda_0\approx 0.4658$ such that $\min h>1$ for $\lambda<\lambda_0$.
\end{exam}

\section{Invariant measures and chaos of structured populations}
\label{s:inf-dim}
Structured populations are usually described by partial differential equations. 
The dynamical systems generated by such equations
are defined on infinite dimensional spaces.
As an example we consider the following equation
\begin{equation}
\label{liniowe}
\frac{\partial u}{\partial t}+x\frac{\partial u}{\partial x}= \lambda u,\quad x\in [0,1], \quad t\ge 0, \quad \lambda >0
\end{equation}
with the initial condition $u(0,x)=v(x)$.
This equation generates a semiflow
$\{S^t\}_{t\ge 0}$ on the space
$X=\{v\in C[0,1]\colon \ v(0)=0\}$
given by the formula
$S^tv(x)=e^{\lambda t}v(e^{-t}x)$.
One method of study of ergodic properties of this semiflow 
developed by Lasota~\cite{Lasota} is based on 
the Krylov--Bogolyubov theorem  
on the existence of invariant measures for dynamical systems on compact spaces. 
Unfortunately an invariant measure constructed by this method 
does not have sufficiently strong ergodic and analytic properties.

Another method developed 
in the papers~\cite{BK,Rudnicki-pde1,Rudnicki-pde2} 
is based on the following observation. Let
$Q\colon X\to C[0,\infty)$ be the map given by 
$(Qv)(t)=S^tv(1)=e^{\lambda t}v(e^{-t})$
and let 
$\{T^t\}_{t\ge 0}$ be  the left-side shift on the space
$Y=Q(X)$ defined by $(T^t \varphi)(s)=\varphi(s+t)$.
Then $T^t\circ Q=Q\circ S^t$ for $t\ge 0$.
Let 
$\xi_t$, $t\ge 0$, be a stationary stochastic process defined on a probability space $(\Omega,\Sigma,{\rm P})$
 whose  sample paths  are in  $Y$.
 Then the measure $m(A)={\rm P}(\omega\colon \xi_.(\omega)\in A)$
 defined on the Borel $\sigma$-algebra  $\mathcal B(Y)$
 is invariant with respect to $\{S^t\}_{t\ge 0}$.
Since  dynamical systems $\{S^t\}_{t\ge 0}$
and $\{T^t\}_{t\ge 0}$ 
are isomorphic the measure $\mu(A)=m(Q(A))$ is invariant 
with respect to $\{S^t\}_{t\ge 0}$.

Let $w_t$, $t\ge 0$,  be a Wiener process  starting from $0$
with continuous sample paths and let 
$\xi_t=e^{\lambda t} w_{e^{-2\lambda t}}$
for $t\ge 0$. Then  $\xi_t$, $t\ge 0$, is a Gaussian stationary
process with continuous paths. If $m$ is the measure induced by 
the process $(\xi_t)$, then the measure $\mu$ is induced by the process $\zeta_x=w_{x^{2\lambda}}$, $x\in [0,1]$.
Since the Wiener measure is  positive on nonempty open subsets of $X$
the same property has the measure $\mu$.

Now we check that the dynamical system
$(X,\mathcal B(X),\mu,S^t)$ is exact.
Denote
by $\mathcal F_T$ the $\sigma$-algebra of events generated by
the process $w_t$ for $t\in T$.
From the definition of the dynamical system $\{S^t\}_{t\ge 0}$ it follows that 
$S^{-t}(\mathcal B(X))\subseteq  \mathcal F_{[0,e^{-2\lambda t}]}$
for $t\ge 0$. According to 
Blumenthal's zero-one law the $\sigma$-algebra
$\mathcal F_{0^+}=\bigcap_{s>0} \mathcal F_{[0,s]}$
contains only sets of measure zero, the same property has the 
$\sigma$-algebra
 $\bigcap_{t\ge 0} S^{-t}(\mathcal B(X))$, which proves 
that the dynamical system
$(X,\mathcal B(X),\mu,S^t)$ is exact.

In summary, the dynamical system $\{S^t\}_{t\ge 0}$ has an invariant measure,
which is positive on nonempty open sets (property (P)) and this system is exact. 
These properties imply strong chaotic behaviour of this dynamical system. From (P) and ergodicity  it follows that $\mu$-almost all  
trajectories are dense. From (P) and from the mixing property 
it follows a  strong version of unstability
called \textit{sensitive dependence on initial conditions}:
that 
there exists a constant $\eta>0$ such that for each 
point $v\in X$ and for each $\varepsilon>0$ there exist a point 
$\bar v$ and  $t>0$, such that  $\|v-\bar v\|<\varepsilon$  
and $\|S^tv-S^t\bar v\|>\eta$. 

Our system has also turbulent trajectories in the sense of Bass.
We recall that a trajectory $\mathcal O(v)=\{S^tv\colon t\ge 0\}$ of  $v\in X$ is 
\textit{turbulent in the sense of Bass}\index{turbulent in the sense of Bass} \cite{Bass}
if there exists an $v_0\in X$ and a function  
$\gamma\in C([0,\infty),X)$  
such that
\begin{enumerate}
\item[(I)]
$\lim\limits_{T\to\infty}
\frac1T
\int_0^T S^tv\,dt=v_0$, 
\item[(II)]
$\lim\limits_{T\to\infty}
\frac1T
\int_0^T (S^tv-v_0) 
(S^{t+\tau}v-v_0) \,dt = \gamma (\tau)$,
\item[(III)]
$\gamma (0)\neq 0$ and 
$\lim_{\tau\to\infty}
\gamma (\tau)= 0$.
\end{enumerate}
The number  $\gamma(\tau)$
describes the correlation between the trajectory 
$\mathcal O(v)$
and its $\tau$-time shift $\mathcal O(S^{\tau}v)$.
Condition 
$\lim_{\tau\to\infty}\gamma (\tau)= 0$
can be interpreted as the ``lack of memory" because 
the trajectory and its $\tau$-time shift are 
``almost independent" for large $\tau$.
The proof of this property is more advanced \cite{Rudnicki-pde2}.  
We need to check that 
the second moment of $\mu$ is finite, i.e.
$\int_{X} \| v \|^2 \mu (d v) < \infty$
and then apply an ergodic theorem for Banach space valued random variables to prove (I) and (II), and mixing property to prove (III).
 
We can consider a dynamical system $\{S^t\}_{t\ge 0}$ restricted to the set 
$X_+=\{v\in X\colon v\ge 0\}$. Then 
the measure $\mu$ is induced by the process $\zeta_x=|w_{x^{2\lambda}}|$, $x\in [0,1]$,
is invariant with respect to $\{S^t\}_{t\ge 0}$, positive on nonempty open sets
and the system $(X_+,\mathcal B(X_+),\mu,S^t)$ is exact. It means that this system has all above mentioned chaotic properties.

Now we apply this observation to study two models of hematopoietic system.
The proper functioning of the hematopoietic system largely depends on the process of multiplication and differentiation (maturation) of erythrocytes. 
The process of erythrocyte production itself is quite complicated, as 
the formation of a mature erythrocyte occurs as a result of multiple division and differentiation of erythroid stem cells. 
This process is regulated by the concentration of erythropoietin. 
It turns out that when the external regulatory mechanisms of the hematopoietic system do not work, the system can act chaotically.

Although our models are rather simple in comparison with
other models for erythroid production (e.g. \cite{LMW}, \cite{DM}),
they are based on the same continuous maturation-proliferation scheme.
In both models we  assume that 
the level of morphological development, briefly referred as the cell \textit{maturity},
is expressed by a parameter $x$ in the interval $[0,1]$.
We assume that maturity is a real number $x\in [0,1]$. 
The function $u(t,x)$ describes the  distribution 
of cells with respect to their maturity.
Assume that maturity grows according to the  equation
$x'=g(x)>0$, $g(0)=0$. We assume that at the time of division, the daughter cells
have the same maturity as the parent cell.
When the cell reaches maturity $x=1$ it leaves the bone marrow.
In the first model, we assume that 
each cell splits
with rate $c$ or dies  with rate
 $d$. 
Let $c=b-d$. 
Then the distribution $u(t,x)$ of the population 
at time $t$ relative to maturity $x$
satisfies the equation 
\begin{equation}
\label{prekursor-krwi-1-n}
\frac{\partial u} {\partial t} + \frac{\partial}{\partial x}
(g ( x)  u ) =c u.
\end{equation}
This model can be found in the work~\cite{LMW}.
We will consider here a simplified version with $g(x)=x$.
Then Eq.~\eqref{prekursor-krwi-1-n} can be written in the form
of Eq.~\eqref{liniowe} with $\lambda=c-1$. Thus if $c>1$ this model is chaotic.

Now we consider the second model~\cite{Rudnicki-chaos-den}.
We assume that when one cell reaches the maturity $x=1$ 
it leaves the bone marrow, and then 
one of cells from the bone marrow splits. 
This cell is chosen randomly according to the distribution given by the density
$p(t,x)$ of all stem cells. Let $D_0$ be the subset of densities $p$ such that 
$\int_0^{\varepsilon}p(x)\,dx>0$ for each $\varepsilon>0$.
We need to assume that the initial density $p_0(x)=p(0,x)$ belongs to $D_0$.
Otherwise, the stem cell population will die out in finite time.
Then for each $t>0$ also the density $p(t,x)$ belongs to $D_0$.

If the density $p_0$ is a differentiable function, then $p$ satisfies a nonlinear partial differential equation
\begin{equation}
\label{e-m-pr-nielinowy}
\frac{\partial p}{\partial t}+
\frac{\partial }{\partial x} (xp)=
p(t,1)p(t,x).
\end{equation}
The difference between the equations~(\ref{prekursor-krwi-1-n}) and (\ref{e-m-pr-nielinowy})
is that in the first model, the growth rate is $c$, while in the second model 
this rate is equal to $u(t,1)$. 
This form of the rate comes from the fact that $u(t,1)$ 
describes the number of cells leaving the bone marrow per unit time
and this is also how many new cells appears.
Is not difficult to check that
\begin{equation}
\label{e2}
p(t,x)=\frac{p_0(e^{-t}x)}{\int_0^1p_0(e^{-t}x)\,dx}
\end{equation}
and the formula \eqref{e2} defines a dynamical system $\{P^t\}_{t\ge 0}$ on $D_0$ 
given by $P^tp_0(x)=p(t,x)$.

In~\cite{Rudnicki-chaos-den} it is proved that there exists a probability measure $\nu$ 
on $\mathcal B(D_0)$ such that $\nu$  is positive on nonempty open subsets of $D_0$
and the dynamical system
$(D_0,\mathcal B(D_0),\nu,P^t)$ is exact.
The proof of this results runs as follows. First we extend  
the dynamical system $\{S^t\}_{t\ge 0}$ from $X_+$ onto 
the space $L^1_+=\{v\in L^1[0,1]\colon v\ge 0\}$ and define 
the measure $\tilde \mu$ by
$\tilde \mu(A)=\mu(A\cap X_+)$ for $A\in \mathcal B(L^1_+)$, where $\mu$ 
is the previously defined invariant measure with respect to $\{S^t\}_{t\ge 0}$.
Let $Y_0$ be the set which consists of all 
functions $f\in  L^1_+$ such that 
$\int_0^{\varepsilon}f(x)\,dx>0$ for each $\varepsilon>0$.
Then we prove that  $\tilde \mu(Y_0)=1$, the measure  $\tilde \mu$ 
is positive on nonempty open subsets of $D_0$ and the system 
$(Y_0,\mathcal B(Y_0),\tilde\nu,S^t)$ is exact. 
Next we check that the function  $H\colon X_0\to D_0$ defined by 
$Hv=\frac {v}{\|v\|}$ is continuous and satisfies $P^t\circ H=H\circ S^t$ 
for $t\ge 0$. Let $\nu(A)=\tilde\mu(H^{-1}(A))$ for $A\in\mathcal B(D_0)$.
Then the dynamical system $(D_0,\mathcal B(D_0),\nu,P^t)$ has all 
 the required properties.

The method for studying chaos  presented above 
can be extended to a very broad class of partial differential equations
of first order  with one dimensional space variable $x$
and for some hyperbolic equations.
In~\cite{Rudnicki-pd} we study the dynamical system obtained by solving the equation
\begin{equation}
\label{e22}
\frac{\partial u}{\partial t}+
\frac{\partial }{\partial x} (gxu)=
-(m+d)u(t,x)+4du(t,2x)
\end{equation}
which models the size structured cellular population.
We  show that the considered dynamical system 
with an appropriately chosen invariant measure $m$ on the space 
of initial functions $X$ is mixing and the measure $m$ is positive on nonempty open subsets of $X$.

A major challenge is to study the ergodic properties of dynamical systems generated by partial differential equations 
with a multidimensional spatial coordinate $x$. We now consider 
a model of this type given by
the following equation
\begin{equation}
\label{pde-eq-div}
\frac{\partial u}{\partial t}(t,x)+\di(a(x)u(t,x))=\lambda (1-u/K(x))u.
\end{equation}
This equation describes the logistic growth
of a structured population. Any individual is characterized by a vector $x\in {\mathbb R\!}^d$, for example $x$ can be a location in the space
or other parameters as age, maturity, size, etc. These parameters change according to the equation $x'=a(x)$. The function $g(x,u)=\lambda (1-u/K(x))u$ 
describes the population growth per unit time and $u(t,x)$ is  the population density, that is, $\int_Au(t,x)\,dx$ is the number of individuals with parameters in the set $A$ at time $t$. The positive  function $K$ is the local carrying capacity and $\lambda>0$ is the maximum growth rate. 

If $f(x,u)=g(x,u)-u\di a(x)$, then Eq.~\eqref{pde-eq-div} 
can be written in the form
\begin{equation}
\label{pde-eq-1}
\frac{\partial u}{\partial t}(t,x)+a_1(x)\frac{\partial u}{\partial x_1}(t,x)+\dots+a_d(x)\frac{\partial u}{\partial x_d}(t,x)=f(x,u(t,x)),
\end{equation}
$t\ge 0$, $x\in D$, where $D$ is a bounded set in 
${\mathbb R\!}^d$
diffeomorphic with the ball 
$B =\{x\in {\mathbb R\!}^d\colon |x|\le 1\}$.
Ergodic properties of the dynamical system generated by this equation are studied in~\cite{JDE-2023}.  

We assume $a\colon D\to {\mathbb R\!}^d$ be a $C^1$ function, 
$a(\mathbf 0)=\mathbf 0$.
Consider the initial problem 
\begin{equation}
\label{eq-a1}
x'(t)=-a(x(t)),\quad x(0)=x_0\in D. 
\end{equation} 
Denote by $\pi_tx_0$ the solution of \eqref{eq-a1} and
assume that $\lim_{t\to\infty}\pi_tx_0=\mathbf 0$ for all $x_0\in D$.
We also assume that $f\colon D\times \mathbb R\to \mathbb R$ is a $C^1$ function.
We assume that there exist positive constants $A,B$ such that
$f(x,u)u\le A+Bu^2$ for $(x,u)\in D\times \mathbb R$.
Then for each $C^1$ function $v\colon D\to\mathbb R$ 
there exists a unique solution of \eqref{pde-eq-1}
satisfying the initial condition $u(0,x) = v(x)$. 
We define the map $S^tv(x)=u(t,x)$ and extend it to 
the dynamical system $\{S^t\}_{t\ge 0}$ on the space $C(D)$.

We can restrict the dynamical system $\{S^t\}_{t\ge 0}$ to some subspace of $C(D)$. For our purposes we assume additionally that $f(x,0)=0$ for $x\in D$,
$\dfrac {\partial f}{\partial u}(\mathbf 0,0)>0$, 
and there exists $u_0>0$ such that 
$f(\mathbf 0,u_0)=0$, $\dfrac {\partial f}{\partial u}(\mathbf 0,u_0)<0$,
$f(\mathbf 0,u)>0$ for $u\in (0,u_0)$. Then there exists a unique stationary solution $u^+$ of~\eqref{pde-eq-1} such that $u^+(\mathbf 0)=u_0$. 
Let 
\[
X=\{v\in C(D)\colon v(\mathbf 0)=0,\quad v\ge 0,\quad v(x)<u^+(x) 
\textrm{ \,for $x\in D$}\}.  
\]
Then the dynamical system $\{S^t\}_{t\ge 0}$ can be restricted to the set $X$.
One of the results of the paper~\cite{JDE-2023} is that there exists
a probability measure $\mu$ 
on $\mathcal B(X)$ such that $\mu$  is positive on nonempty open subsets of $X$
and the dynamical system
$(X,\mathcal B(X),\mu,S^t)$ is exact.
In order to apply this result to Eq.~\eqref{pde-eq-div} it is sufficient to assume that $\lambda>\di a(\mathbf 0)$.
 
The main differences in the proofs of these results for one-dimensional and multidimensional $x$ variable are the following.
In the construction of the invariant measure $\mu$ instead of the Wiener process, we use the random field called the L\'evy $d$-parameter Brownian motion. Instead of the left-side shift on the space $C[0,\infty)$ 
we use the dynamical system  $\{T^t\}_{t\ge 0}$ on the space  
$C([0,\infty)\times \partial B)$ defined by 
$T^tw(s,y)=w(s+t,y)$, for $s,t\ge 0$ and $y\in \partial B$,
where $\partial B$ is a unit sphere in ${\mathbb R\!}^d$.

\end{document}